\DeclareMathAlphabet{\mathitbf}{OML}{cmm}{b}{it}
\newcommand{\ZZ}{{\mathbb Z}}    % Integers
\newcommand{\sym}{\mathcal{S}}   % the symmetric group
\newcommand{\Av}{\mathrm{Av}}    % Av(..)
\newcommand{\A}{\mathcal{A}}     % Class A
\newcommand{\B}{\mathcal{B}}     % Class B
\newcommand{\C}{\mathcal{C}}     % Class C
\newcommand{\D}{\mathcal{D}}     % Class D
\newcommand{\R}{\mathcal{R}}     % Class R
\newcommand{\Dec}{\mathcal{D}}   % Class D (decreasing)
\newcommand{\Inc}{\mathcal{I}}   % Class I (increasing)
\newcommand{\K}{\mathcal{K}}   % A language
\newcommand{\X}{\mathcal{X}}   % Separables
\newcommand{\ledot}{<\!\!\!\cdot\,\,}
\newcommand{\V}{\ensuremath{\mathsf{V}}}
\newcommand{\Lt}{\rotatebox[origin=c]{270}{\V}}
\newcommand{\Gt}{\rotatebox[origin=c]{90}{\V}}
\newcommand{\Caret}{\rotatebox[origin=c]{180}{\V}}
\theoremstyle{plain}
\newtheorem{theorem}{Theorem}
\newtheorem*{theorem*}{Theorem}
\newtheorem{corollary}[theorem]{Corollary}
\newtheorem*{corollary*}{Corollary}
\newtheorem{lemma}[theorem]{Lemma}
\newtheorem*{lemma*}{Lemma}
\newtheorem{proposition}[theorem]{Proposition}
\newtheorem*{proposition*}{Proposition}
\newtheorem*{conjecture*}{Conjecture}
\theoremstyle{definition}
\newtheorem*{definition*}{Definition}
\newtheorem*{example*}{Example}
\newtheorem*{problem*}{Problem}
\theoremstyle{remark}
\newtheorem*{remark*}{Remark}
\DeclareMathOperator{\Cl}{\mathrm{Cl}}
\DeclareMathOperator{\shadow}{\ensuremath{\Delta}}
\title{Isomorphisms between pattern classes}
\author[Albert]{M. H. Albert}
\author[Atkinson]{M. D. Atkinson}
\author[Claesson]{Anders Claesson}
\address{M. H. Albert and M. D. Atkinson:
Department of Computer Science,
University of Otago,
Dunedin, New Zealand
}
\address{A. Claesson: 
Department of Computer Science,
University of Strathclyde, Glasgow, UK }
\begin{document}

\begin{abstract}
Isomorphisms $\phi:\A\longrightarrow\B$ between pattern classes are considered.  It is shown that, if $\phi$ is not a symmetry of the entire set of permutations, then, to within symmetry, $\A$ is a subset of one a small set of  pattern classes whose structure, including their enumeration, is determined.
\end{abstract}

\maketitle
\thispagestyle{empty}

%\begin{keywords}
%Permutation, pattern class, isomorphism, automorphism
%\end{keywords}

\section{Introduction}\label{sec1}

The set $\sym$ of all (finite) permutations is a partially ordered set under the pattern containment order.  It has been known since the work of Simion and Schmidt \cite{simion:restricted-perm:} that this poset has eight poset automorphisms induced by the symmetries of the square which act in a natural way on the diagrams of permutations.  Throughout this paper we shall simply call these automorphisms \emph{symmetries}.  It was tacitly assumed for many years that there were no further automorphisms of this poset but this fact was not explicitly proved until Smith's work \cite{smith:permutation-rec:} on permutation reconstruction.

The pattern containment order is studied almost exclusively by the investigation of down-sets (lower ideals) and these are called \emph{pattern classes}.  
% One very convenient way of describing a pattern class $\A$ is by its \emph{basis} -- the set of permutations minimal with respect to not belong to $\A$.  Because the pattern containment order is not a quasi-well-order the basis need not be finite; nevertheless the majority of pattern classes that have been researched do in fact have a finite basis.
The image of a pattern class under one of the eight symmetries is again a pattern class having the same  order-theoretic properties;  in systematic investigations this can be very  useful in limiting the number of cases that have to be considered.  In particular two pattern classes connected in this way have the same enumeration, a property which is known as \emph{Wilf-equivalence}.  One interesting aspect of the theory of pattern classes is that Wilf-equivalent classes may not be related by a symmetry.  Even more strikingly two Wilf-equivalent pattern classes are usually not even isomorphic as ordered sets. This leads to a natural question:
\begin{quote}
{\it
What order-preserving isomorphisms are there between pattern classes beyond the restrictions of the eight symmetries?
}
\end{quote}

In this paper we shall answer this question.  Our answer is complete in the following sense:

\begin{enumerate}
\item
We do not count two isomorphisms that differ within symmetries as being distinct.  Thus, if $\phi:\A\longrightarrow\B$ is an isomorphism and $\mu$ and $\nu$ are symmetries then the isomorphisms $\phi$ and  $\mu^{-1}\phi\nu:\A\mu\longrightarrow\B\nu$ are essentially the same.\footnote{Note that we write the action of functions on sets in line and on the right, while we will denote the action of a function on an element by exponential notation.}
\item
If $\phi:\A\longrightarrow\B$ is an isomorphism and there are larger classes $\A\subseteq\A_0$ and $\B\subseteq\B_0$ with an isomorphism $\psi$ between them for which $\psi |_{\A}=\phi$ then we do not regard $\phi$ as the ``real'' isomorphism.  In other words we shall be chiefly interested in \emph{maximal isomorphisms} $\phi:\A\longrightarrow\B$ that cannot be extended to a larger class.
\end{enumerate}

To expand on the second point, note that the union of a chain (ordered by inclusion) of isomorphisms is also an isomorphism, so each isomorphism is in fact contained in a maximal one -- but in fact we will see as a consequence of Proposition \ref{prop:finite} below that there are even more restrictive conditions which make it possible to carry out the desired classification.

In the next section we shall give the necessary notation and background to understand our basic method.  Then in Section \ref{results} we construct the maximal isomorphisms $\phi:\A\longrightarrow\B$ as well as describe the structure and enumeration of the classes $\A$ on which they are defined.

\section{Preparatory background}
\label{prepsection}
Let $P$ be a poset and let $x < y$ in $P$. We say that $y$ \emph{covers}
$x$, denoted $x \ledot y$, if no $z\in P$ satisfies $x < z < y$. We also define the \emph{shadow} and \emph{downset} (down closure) of $y\in P$
by
\[
\shadow y = \{ x\in P : x\ledot y \}
\quad\text{and}\quad
\Cl(y) = \{ x\in P : x \leq y \}
\]
A bijection  $f:P\to Q$ between two posets $P$ and $Q$ is an  \emph{isomorphism} if $f(x)<f(y)$ if and only if $x<y$. If, in addition,  $P=Q$ then $f$ is
an \emph{automorphism}.  All the posets we will be considering are lower ideals of $\sym$. All these posets are \emph{ranked} -- for every permutation $\pi$ the lengths of the maximal chains with maximum element $\pi$ are all (finite and) the same. In particular, the rank of $\pi \in \sym$ is equal to the length of $\pi$ (number of letters in $\pi$). Any bijection $f$ between ranked posets is an isomorphism exactly when  $f(x)\ledot f(y)$ if and only if $x\ledot y$.  Since the covering relations of an ordered set are precisely the relations $x<y$ where $x\in\shadow(y)$ we have

\begin{lemma}\label{lemma:shadow-and-downset}
  A bijection $f:P\to Q$ between ranked posets is an isomorphism if and only if
  %the following three conditions are equivalent.
%  \begin{enumerate}
 % \item $f$ is an  isomorphism; \label{a}
%  \item $\Cl(y)^f = \Cl(y^f)$ for all $y\in P$; \label{b}
  $(\shadow y)^f = \shadow(y^f)$ for all $y\in P$.% \label{c}
 % \end{enumerate}
\end{lemma}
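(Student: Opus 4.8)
The plan is to reduce the statement to the covering-relation characterization already recorded in the text, namely that a bijection $f$ between ranked posets is an isomorphism precisely when $x\ledot y$ holds if and only if $x^f\ledot y^f$. Once that equivalence is available, the lemma becomes a translation between the language of covers and the language of shadows: by definition $x\ledot y$ is the same as $x\in\shadow y$, and the text's observation that the covering relations are exactly the pairs with $x\in\shadow(y)$ shows that $\shadow(y^f)$ is exactly the set of covers of $y^f$. Thus the condition $(\shadow y)^f=\shadow(y^f)$ simply repackages the two-directional covering statement.

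For the forward implication I would assume $f$ is an isomorphism and establish the two inclusions defining $(\shadow y)^f=\shadow(y^f)$. The inclusion $(\shadow y)^f\subseteq\shadow(y^f)$ is immediate: every element of the left-hand side has the form $x^f$ with $x\ledot y$, and cover-preservation yields $x^f\ledot y^f$. For the reverse inclusion I would invoke surjectivity of $f$: a given $z\ledot y^f$ can be written $z=x^f$ for some $x\in P$, and since $f$ reflects covers this gives $x\ledot y$, so $z\in(\shadow y)^f$.

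For the backward implication I would assume $(\shadow y)^f=\shadow(y^f)$ for every $y$ and verify the covering equivalence, after which the preliminary characterization finishes the job. If $x\ledot y$ then $x\in\shadow y$, so $x^f\in(\shadow y)^f=\shadow(y^f)$, giving $x^f\ledot y^f$. Conversely, if $x^f\ledot y^f$ then $x^f\in\shadow(y^f)=(\shadow y)^f$, so $x^f=w^f$ for some $w\ledot y$; here injectivity of $f$ forces $x=w$, whence $x\ledot y$.

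The argument is essentially bookkeeping, and I expect no serious obstacle beyond keeping track of which direction of the bijection is used where. The only two non-formal moves are the appeals to surjectivity (to realize an arbitrary cover of $y^f$ as an image $x^f$) and to injectivity (to cancel $f$ and recover $x=w$); these are precisely the points at which the hypothesis that $f$ is a \emph{bijection}, rather than merely an order-preserving map, does the real work.
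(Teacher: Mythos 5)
Your proposal is correct and follows essentially the same route as the paper, which derives the lemma in one line from the preceding characterization of isomorphisms between ranked posets via covering relations, together with the observation that $x\ledot y$ is the same as $x\in\shadow(y)$. You have simply written out the bookkeeping (including the uses of injectivity and surjectivity) that the paper leaves implicit.
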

%\begin{proof} For $y$ in $P$, let $\shadow^0 y = \{y\}$ and
%  $\shadow^{k+1} y = \cup \{ \shadow x : x\in\shadow^k y\}$. That
%  \eqref{b} and \eqref{c} are equivalent follows from $\Cl(y) = \cup
%  \{\shadow^k y : k\geq 0\}$. We shall now show that \eqref{a} and
%  \eqref{b} are equivalent. Assume that $f$ is an  isomorphism. Then
%  \[\Cl(y)^f
%  = \{x^f : x\leq y\}
%  = \{x^f : x^f\leq y^f\}
%  = \{x : x\leq y^f\} = \Cl(y^f).
%  \]
%  To show the converse note that $x\leq y$ precisely when $x^f\in
%  \Cl(y)^f$. Assuming that $\Cl(y)^f = \Cl(y^f)$ we thus have that
%  $x\leq y$ precisely when $x^f \in \Cl(y^f)$, which in turn is
%  equivalent to  $x^f \leq y^f$.
%\end{proof}

Smith~\cite{smith:permutation-rec:} has shown
that no pairs of permutations of length at least five share the same
shadow.  This is a key step in her proof that the automorphism group of symmetries of $\sym$ is the dihedral group of order eight; it contains reverse ($r$), complement ($c$) and inverse
($i$). If we refine her shadow result slightly by spelling out what happens
for permutations of fewer than five elements we arrive at the following
proposition.

\begin{proposition}\label{prop:smith}
  Let $\sigma$ and $\tau$ be permutations. Then
  \begin{align*}
    \shadow \sigma  &= \shadow \tau
  \intertext{if and only if one of the following holds}
    \{\sigma,\tau\} &= \{12,21\}; \\
    \{\sigma,\tau\} &\subseteq \{132,213,231,312\}; \\
    \{\sigma,\tau\} &= \{2413,3142\}; \\
    \sigma &= \tau.
  \end{align*}
\end{proposition}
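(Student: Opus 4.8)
The plan is to combine Smith's shadow theorem, which we may use for permutations of length at least five, with a direct finite verification for the remaining small lengths. First I would record a length observation: if $\sigma$ has length $n$ then $\shadow\sigma$ consists entirely of permutations of length $n-1$, and every nonempty permutation has nonempty shadow (deleting any single letter yields a pattern of length one smaller), while the empty permutation is the only one with empty shadow. Hence $\shadow\sigma=\shadow\tau$ forces $\sigma$ and $\tau$ to have the same length $n$. For $n\ge 5$, Smith's theorem \cite{smith:permutation-rec:} asserts that distinct permutations have distinct shadows, so $\shadow\sigma=\shadow\tau$ gives $\sigma=\tau$, the last alternative. This reduces everything to the finite problem of classifying equal-shadow pairs of length $n\le 4$.

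For the ``if'' direction I would simply exhibit the coincidences by computing shadows. For $n=1$ there is nothing to check; for $n=2$ both $12$ and $21$ have shadow $\{1\}$. For $n=3$ I would compute all six shadows and observe that $\shadow(123)=\{12\}$ and $\shadow(321)=\{21\}$ are singletons, whereas each of $132,213,231,312$ has shadow $\{12,21\}$; this establishes the middle two alternatives. For $n=4$ the only computation needed for this direction is $\shadow(2413)=\{132,213,231,312\}=\shadow(3142)$.

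For the ``only if'' direction at $n\le 4$ I would argue that these are the only coincidences. The cases $n\le 3$ are immediate from the shadows already computed, since the six length-$3$ shadows fall into exactly the groups claimed. The case $n=4$ is the real work: one must certify that, among the $24$ permutations of length $4$, the pair $\{2413,3142\}$ is the only one whose two members share a shadow. I would carry this out by computing all $24$ shadows, organised by the invariant $\lvert\shadow\pi\rvert\in\{1,2,3,4\}$, and comparing the resulting subsets of the six length-$3$ permutations within each stratum. Since the symmetries are poset automorphisms that preserve covers, they satisfy the equivariance $(\shadow\pi)\mu=\shadow(\pi^\mu)$ for each symmetry $\mu$; in particular $\lvert\shadow\pi\rvert$ is a symmetry invariant, and this lets me bundle symmetry-related permutations and so cut down the number of essentially different comparisons. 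The computation shows the interesting behaviour lives in the two extreme strata: the singleton shadows belong only to $1234$ and $4321$, the two four-element shadows coincide and yield $\{2413,3142\}$, and all ten two-element and all ten three-element shadows are pairwise distinct.

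The main obstacle is precisely this exhaustive length-$4$ check: there is no slick structural shortcut replacing it, and the genuine difficulty of the statement for large lengths has already been discharged by Smith's theorem, so the remaining burden is organisational—laying out the $24$ shadows carefully enough to be confident that $\{2413,3142\}$ is the unique nontrivial coincidence. Using the eight symmetries to group the cases is the natural way to keep this manageable and to guard against overlooking a collision.
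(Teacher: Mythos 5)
Your proposal is correct and matches the paper's (implicit) argument: the paper offers no written proof, simply invoking Smith's reconstruction theorem for lengths at least five and treating the lengths up to four as a routine finite refinement, which is exactly the reduction you carry out. Your explicit tabulation of the $24$ length-four shadows (and the observation that equal shadows force equal length) just fills in the verification the paper leaves to the reader.
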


%Examples of  automorphism on the
%poset $\sym$ include reverse ($r$), complement ($c$) and inverse
%($i$). Any composition of those mappings will be called a
%\emph{symmetry}. There are eight such symmetries in total. The
%group they form may be denoted $\rci$ and is isomorphic to the dihedral
%group of order eight.

As mentioned in Section \ref{sec1} down-sets of $\sym$ are called pattern classes.   We shall say that two pattern classes are
\emph{ isomorphic} if they are  isomorphic as posets. In
this paper we shall study  isomorphisms between classes.  From
Lemma~\ref{lemma:shadow-and-downset} and Proposition~\ref{prop:smith}
we get the following result.

\begin{proposition}\label{prop:finite}
  Let $\A$ and $\B$ be pattern classes. Then any  isomorphism
  $f:\A\to\B$ is determined by its restriction $f|_{\R}$ to the finite
  pattern class
  \[\R = \Cl\{2413,3142\} = \{1,12,21,132,213,231,312,2413,3142\}.
  \]
\end{proposition}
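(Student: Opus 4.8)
The plan is to induct on the length (equivalently the rank) of a permutation, using Lemma~\ref{lemma:shadow-and-downset} to pass from a permutation to its shadow and Proposition~\ref{prop:smith} to recover a permutation from its shadow. Concretely, I would prove the equivalent assertion that two isomorphisms $f,g\colon\A\to\B$ with $f|_\R=g|_\R$ must coincide. The induction is on a length $n$, with hypothesis that $\pi^f=\pi^g$ for every $\pi\in\A$ of length less than $n$; for $\pi\in\A$ of length $n$ that happen to lie in $\R$ there is nothing to prove, since $f|_\R=g|_\R$.

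For $\pi\in\A\setminus\R$ of length $n$, I would argue as follows. As $\A$ is a downset, each element of $\shadow\pi$ has length $n-1$ and belongs to $\A$, so the inductive hypothesis makes $f$ and $g$ agree on $\shadow\pi$. Lemma~\ref{lemma:shadow-and-downset} then yields
\[
\shadow(\pi^f)=(\shadow\pi)^f=(\shadow\pi)^g=\shadow(\pi^g),
\]
so $\pi^f$ and $\pi^g$ are permutations of the same length (rank being preserved) that share a shadow. By Proposition~\ref{prop:smith} this forces $\pi^f=\pi^g$ unless $\{\pi^f,\pi^g\}$ is one of the listed exceptional sets. The crucial point is that every permutation occurring in those sets already lies in $\R$, which is exactly why $\R$ is the correct domain.

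The remaining task, and the only place where real care is needed, is to exclude the exceptional alternatives at small lengths. For $n\ge5$ this is immediate: Proposition~\ref{prop:smith} says distinct permutations of length at least five never share a shadow, so $\pi^f=\pi^g$ follows at once. The finitely many small cases I would dispatch by hand. At $n=3$ the only $\pi\notin\R$ are $123$ and $321$, whose shadows are singletons; hence $(\shadow\pi)^f$ is a singleton, and as $123$ and $321$ are the only length-three permutations with singleton shadows (and these shadows differ), $\pi^f=\pi^g$ is forced. At $n=4$ the only possible failure is $\{\pi^f,\pi^g\}=\{2413,3142\}$; but $2413$ and $3142$ share a shadow, so their images do too, forcing $\{2413^{f},3142^{f}\}=\{2413,3142\}$ by Proposition~\ref{prop:smith}, and then injectivity of $f$ would put $\pi$ itself in $\{2413,3142\}$, contrary to $\pi\notin\R$.

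I expect the main obstacle to be precisely this low-length bookkeeping: within the exceptional sets the shadow does not by itself pin down a permutation, so the argument must also invoke that $f$ is a bijection carrying the exceptional class $\{2413,3142\}$ onto itself. Everything above length four is handled uniformly by Smith's shadow theorem, so once these finitely many cases are settled the induction closes and $f$ is seen to be determined by $f|_\R$.
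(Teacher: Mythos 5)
Your argument is in essence the paper's own proof: take a minimal-length counterexample $\pi$ to $\pi^f=\pi^g$ (necessarily outside $\R$), use Lemma~\ref{lemma:shadow-and-downset} and minimality to get $\shadow(\pi^f)=(\shadow\pi)^f=(\shadow\pi)^g=\shadow(\pi^g)$, and then apply Proposition~\ref{prop:smith}. The paper leaves the exceptional cases of Proposition~\ref{prop:smith} entirely to the reader, whereas you work them out, and your length-$3$ analysis via singleton shadows is fine. The one step that needs repair is length $4$: you invoke $2413^{f}$ and $3142^{f}$, which tacitly assumes $2413,3142\in\A$, and nothing in the hypotheses guarantees that. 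What you do know is that $2413=\pi^f$ and $3142=\pi^g$ both lie in $\B$, so run the same argument through the isomorphism $f^{-1}\colon\B\to\A$: since $\shadow(2413)=\shadow(3142)$, Lemma~\ref{lemma:shadow-and-downset} gives $\shadow\bigl(2413^{f^{-1}}\bigr)=\shadow\bigl(3142^{f^{-1}}\bigr)$, and injectivity together with Proposition~\ref{prop:smith} forces $\{2413^{f^{-1}},3142^{f^{-1}}\}=\{2413,3142\}$, contradicting $2413^{f^{-1}}=\pi\notin\R$. With that one-line adjustment your induction closes exactly as you describe, and the proof matches the paper's.
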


\begin{proof}
  Suppose that both $f:\A\to\B$ and $g:\A\to\B$ are 
  isomorphisms. Suppose further that $f|_\R = g|_\R$. For the sake of contradiction suppose that there exists $\pi \in \A$ with $\pi^f \neq \pi^g$. Among all such permutations $\pi$, choose one (also called $\pi$) of minimum length. Then 
%  We will use
%  induction to show that $\pi^f=\pi^g$ for all $\pi\in\A$. The
%  induction will be on the distance, $\delta(\pi)$, of $\pi$ from
%  $\R$. To be more precise, let $d(\pi,\sigma)$ be the graph distance
%  from $\pi$ to $\sigma$ in the Hasse diagram of $\sym$. Then
%  $\delta(\pi) = \min\{ d(\pi,\sigma) : \sigma\in \R \}$.  By
%  assumption $\pi^f = \pi^g$ when $\delta(\pi)=0$. Assume that
%  $\delta(\pi) > 0$. Note that $\delta(\pi) = 1+\delta(\sigma)$ for
%  any $\sigma$ in $\shadow\pi$. We thus have
  \begin{align*}
    \shadow (\pi^f)
    &= (\shadow\pi)^f  && \text{by Lemma~\ref{lemma:shadow-and-downset}}  \\
    &= (\shadow\pi)^g  && \text{by minimality} \\
    &= \shadow (\pi^g) && \text{by Lemma~\ref{lemma:shadow-and-downset}}.
  \end{align*}
  Using Proposition~\ref{prop:smith} it follows that $\pi^f = \pi^g$ and so we have a contradiction.
\end{proof}

%One can check, preferably using a computer, that any  isomorphism
%of the finite poset $\{\pi\in\sym : |\pi| \leq 5 \}$ is a
%symmetry. Thus we have the following result, which provides an
%illustration of the consequences of Proposition~\ref{prop:finite}.
%
%\begin{proposition}
%  Let $\A$ and $\B$ be classes. Let $f:\A\to \B$ be an 
%  isomorphism. If all basis elements of $\A$ have length at least $6$ then
%  $f$ is a symmetry.
%\end{proposition}
%
%In particular, we have rediscovered a result due to Smith:
%
%\begin{theorem}[Smith~\cite{smith:permutation-rec:}]\label{thm:smith}
%  For any integer $n\geq 5$, the  automorphisms of the poset
%  $\{\pi\in\sym : |\pi| \leq n \}$ are exactly the eight symmetries in
%  $\rci$.
%\end{theorem}

By Proposition~\ref{prop:finite}, any  isomorphism $f:\A\to\B$ is
uniquely determined by its restriction to $\R=\Cl\{2413,3142\}$. Since
 isomorphisms are length preserving, it must further hold that
$1^f=1$, $\{12,21\}^f = \{12,21\}$, $\{132,213,231,312\}^f =
\{132,213,231,312\}$ and $\{2413,3142\}^f = \{2413,3142\}$. 

The reader may well wonder ``what if $\A$ does not contain $\R$?'' There are two ways to answer that question. First, and simplest, is to interpret $f |_{\R}$ as $f |_{\R \cap \A}$. Alternatively it is easy to see that $f$ could be extended to an isomorphism between $\A \cup \R$ and $\B \cup \R$ -- by taking the union with any length-preserving bijection between $\R \setminus \A$ and $\R \setminus \B$.

%Thus,
%$1\cdot 2!\cdot 4!\cdot 2! = 96$ is an upper bound on the number of
%distinct  isomorphisms.

This motivates the following approach to the original question: begin with an  isomorphism $h:\R\to \R$; then use Lemma~\ref{lemma:shadow-and-downset} repeatedly to define extensions to larger pattern classes obtaining, in the limit, the \emph{maximal} class $\A$ containing $\R$ and image class $\B$ such that $h$ extends to an  isomorphism $f:\A\to \B$.   To this end, let $\bot$ be
an arbitrary symbol standing for ``undefined''. We first define a function 
$g:\sym\to\sym\cup\{\bot\}$ recursively:
\[
\pi^g =
\begin{cases}
  \pi^h  & \text{if $\pi\in \R$}, \\
  \sigma & \text{if $\pi\not\in\R$ but $(\shadow\pi)^g = \shadow \sigma$ for some $\sigma\in\sym$},\\
  \bot   & \text{otherwise}.
\end{cases}
\]
Note that the second condition implicitly requires that $\theta^g \neq \bot$ for any $\theta \in \shadow \pi$. Now set $\A=\{\pi\in\sym : \pi^g \neq \bot \}$ and $\B = \A^f$, with $f =
g|_{\A}$.  Then $(f,\A,\B)$ is the required triple.

This recursive definition gives rise to an infinite process that converges to the triple $(f,\A,\B)$ -- and to the \emph{basis}, $B$, of $\A$ (those $\beta \in \sym \setminus \A$ for which $\shadow \beta \subseteq \A$). We accumulate the elements of $\A$ and of its basis in order of length. Given a permutation $\alpha$ not presently in $\A$ whose lower covers do belong to $\A$,  we apply $f$ to $\shadow\alpha$ and, if
the result is a shadow of some permutation $\beta$,  we add $\alpha$ to $\A$ and set $\alpha^f =
\beta$; otherwise we add $\alpha$ to the basis $B$ of the class $\A$.  

Considering only the basis of $\A$
there is no {\em a priori} guarantee that this process will terminate, since it is conceivable that the class $\A$ might have an infinite basis.  However, in practice, we always reach a situation where no new basis elements are found in small lengths 7, 8, 9.  At this stage we have a pattern class $\A$ (at this point finite!) on which is defined an isomorphism $f$ and we have a basis $B$ of some class that contains $\A$.  We hope that $\Av(B)$ is the maximal class on which an isomorphism extending $f$ exists. Since $\Av(B)$ necessarily contains every such class it now suffices to prove that $f$ can be defined to an isomorphism on the whole of $\Av(B)$.  This will prove that $\A=\Av(B)$.  It turns out that this can be achieved by a structural analysis of $\Av(B)$. We carry out this analysis in the next section. For any particular map $h$, the detailed calculation of this structure will be largely omitted since it is relatively routine\footnote{We actually used \emph{PermLab} \cite{PermLab1.0} to compute this structure but it is easily within reach of hand calculation}.  We shall however, in each case, display a map defined on $\Av(B)$ that extends $h$ and prove that it is isomorphism.

Before continuing with the consideration of maximal isomorphisms we give a non-maximal isomorphism that features several times in the discussion.  The class $\Av(132, 312)$ is well-known to consist of permutations which are the union of an increasing sequence with a decreasing sequence where the increasing terms are all greater than the decreasing terms.  The diagrams  of such permutations have the form

%\begin{center}
%\pgfimage[width=4cm]{Av(132,312).pdf}
%\end{center}

\[
\begin{tikzpicture}[scale=0.25, baseline=(current bounding box.center)]
  \draw[gray, line width=0.5pt] (0.01,0.01) grid (10.99, 10.99);
  \foreach \x/\y in {1/5,2/6,3/4,4/7,5/8,6/9,7/3,8/2,9/10,10/1} \filldraw (\x,\y) circle (5.2pt);
\end{tikzpicture}
\]
 
 and, for obvious reasons, we name this class $\Lt$. There are 3 further `wedge' classes related to it by symmetries which we call $\Gt$, $\V$ and $\Caret$.  The class $\Lt$ has an obvious automorphism induced by the complement symmetry but, as we shall now see, there is another automorphism not induced by any symmetry.
 
 Let $\pi$ be a permutation of length $n$ in $\Lt$ and define the
word $\pi^\omega = c_2c_3\dots c_n$ in $\{a,b\}^{n-1}$ by
\[ c_i =
\begin{cases}
  a & \text{if $\pi(i) > \pi(1)$}, \\
  b & \text{if $\pi(i) < \pi(1)$},
\end{cases}
\]
It is easy to see that $\omega$ is a
bijection and thus has an inverse $\omega^{-1}$. We can now define a bijection on $\Lt$ by
\[\xi = \omega r \omega^{-1}
\]
Here $r$ denotes reversal (of words) and composition is left to right. As an
example, $45367821^\omega = abaaabb$; the reverse of this string is
$bbaaaba = 43256718^\omega$; and thus $45367821^\xi = 43256718$:
\[
\begin{tikzpicture}[scale=0.25, baseline=(current bounding box.center)]
  \draw[gray, line width=0.5pt] (0.01,0.01) grid (8.99, 8.99);
  \foreach \x/\y in {1/4,2/5,3/3,4/6,5/7,6/8,7/2,8/1} \filldraw (\x,\y) circle (5.2pt);
  \foreach \x in {2,4,5,6} \node[text height=2.1ex] at (\x,-0.5) {$a$};
  \foreach \x in {3,7,8} \node[text height=2.1ex] at (\x,-0.5) {$b$};
\end{tikzpicture}\quad\stackrel{\xi}{\longmapsto}\quad
\begin{tikzpicture}[scale=0.25, baseline=(current bounding box.center)]
  \draw[gray, line width=0.5pt] (0.01,0.01) grid (8.99, 8.99);
  \foreach \x/\y in {1/4,2/3,3/2,4/5,5/6,6/7,7/1,8/8} \filldraw (\x,\y) circle (5.2pt);
  \foreach \x in {4,5,6,8} \node[text height=2.1ex] at (\x,-0.5) {$a$};
  \foreach \x in {2,3,7} \node[text height=2.1ex] at (\x,-0.5) {$b$};
\end{tikzpicture}
\]

Since $\xi$ is a fairly unusual bijection we shall call it the \emph{exotic map} on $\Lt$.

\begin{proposition}
The exotic map is an automorphism of $\Lt$.
\end{proposition}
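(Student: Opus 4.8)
The plan is to apply Lemma~\ref{lemma:shadow-and-downset}. Since $\Lt$ is a lower ideal of $\sym$ it is ranked by length, and $\xi=\omega r\omega^{-1}$ is a length-preserving bijection of $\Lt$ onto itself: it is a composite of the bijection $\omega$, word reversal $r$, and $\omega^{-1}$, and reversal preserves word length, so $\omega^{-1}$ is applied to a word of the right length and returns a member of $\Lt$. Hence it suffices to check the single condition $(\shadow\pi)^\xi=\shadow(\pi^\xi)$ for every $\pi\in\Lt$, where the shadow may be computed in $\Lt$ or in $\sym$ indifferently, since $\Lt$ is a down-set. My strategy is to transport this condition through $\omega$ into the free monoid $\{a,b\}^\ast$, where both the shadow operation and $\xi$ become transparent.

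The key step is to describe $(\shadow\pi)^\omega$ in word terms. Writing $w=\pi^\omega=c_2\cdots c_n$, I claim that $(\shadow\pi)^\omega$ is exactly the set $\delta(w)$ of words obtained from $w$ by deleting a single letter. The lower covers of $\pi$ are the length-$(n-1)$ patterns obtained by deleting one entry $\pi(j)$ and standardising. For $j\ge 2$ the first entry, and hence the threshold value $\pi(1)$ that defines the letters, is unchanged, so the resulting word is $w$ with the letter $c_j$ removed. For $j=1$ the threshold moves from $\pi(1)=m$ to $\pi(2)$, and here one uses the structure of $\Lt$: if $c_2=a$ then $\pi(2)$ is the least top value $m+1$, while if $c_2=b$ then $\pi(2)$ is the greatest bottom value $m-1$. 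In either case I would check that every later entry keeps its status relative to the new threshold, so that the effect is simply to delete the first letter $c_2$. Thus the $n$ possible deletions realise precisely the $n-1$ single-letter deletions of $w$, giving $(\shadow\pi)^\omega=\delta(w)$.

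With this in hand the proof concludes formally. Applying the bijection $\omega$ to the desired identity and using $(\sigma^\xi)^\omega=(\sigma^\omega)^r$, the left side becomes $((\shadow\pi)^\omega)^r=\delta(w)^r$ and the right side becomes $\delta((\pi^\xi)^\omega)=\delta(w^r)$. So the statement reduces to the word identity $\delta(w)^r=\delta(w^r)$, namely that reversing each single-letter deletion of $w$ yields exactly the single-letter deletions of the reversed word. This is immediate, since deleting the $i$-th letter and then reversing is the same as reversing and then deleting the $(|w|+1-i)$-th letter, and reversal is a bijection on words, so the two sets coincide.

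The only non-routine ingredient is the analysis of deleting the first entry, the case $j=1$; everything else is formal once one knows that the shadow corresponds to single-letter word deletion. I expect this to be the main obstacle, because it is the one place where the defining structure of $\Lt$ (top values increasing, bottom values decreasing, with $\pi(1)$ the separating value) is genuinely used, both to identify the new threshold $\pi(2)$ and to verify that no later letter changes.
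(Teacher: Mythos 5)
Your proof is correct and follows essentially the same route as the paper: both arguments show that $\omega$ carries the covering relations of $\Lt$ onto single-letter deletions of words (with the one non-routine point being that deleting the first entry of $\pi$ yields the same lower cover as deleting the second, so the word still just loses one letter), and then conclude that $\xi=\omega r\omega^{-1}$ is a composition of poset isomorphisms. The paper phrases this via covering relations and you phrase it via Lemma~\ref{lemma:shadow-and-downset}, but these are the same condition.
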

\begin{proof}
The set $\K$ of words over the alphabet $\{a,b\}$ is an ordered set  under the subsequence ordering.  Its covering relations $\lambda<\mu$ are those relations in which $\lambda$ is obtained from $\mu$ by deleting a single letter.  We shall show that $\omega$ is an isomorphism of ordered sets by verifying that it maps the set of covering relations of $\Lt$ onto the set of covering relations of $\K$.

A covering relation $\alpha<\beta$ of $\Lt$ is a pair of permutations in which $\alpha$ is obtained from $\beta$ by deleting a term of $\beta$ (and relabeling).  If this deleted term is not the first term then the corresponding words of $\K$  are related in that one word is obtained from the other by deleting the letter that corresponds to the deleted term.  But, if the deleted term is the first term then the resulting permutation is the same as that obtained when we delete the second term. The converse is equally clear.

Therefore the exotic map is a composition of  isomorphisms $\omega r \omega^{-1}$ on ordered sets and so is also an isomorphism.
\end{proof}

Finally in this section we recall some standard terminology.  An \emph{interval} in a permutation $\pi$ is a contiguous subsequence of $\pi$ whose terms form a set of consecutive values.  If the only intervals of $\pi$ are itself and singletons then $\pi$ is said to be \emph{simple}.  Notation such as $\pi=\sigma[\alpha_1,\ldots,\alpha_r]$ signifies that $\pi$ can be represented as a juxtaposition of intervals $\pi=\alpha_1\cdots\alpha_r$ where the pattern formed by the $\alpha_i$ defines the permutation $\sigma$.  In such a case we say that $\pi$ is obtained from $\sigma$ by \emph{inflating} its points into intervals $\alpha_i$.  In the special cases $\sigma=12$ or $\sigma=21$ then we say that $\pi$ is decomposable or skew-decomposable respectively.

%For each permutation $\alpha$ of length $k$, if the image of $\alpha$
%is unknown, compute $\shadow\alpha$, apply $f$ to this shadow, and if
%the result is a shadow of some permutation $\beta$, set $f(\alpha) =
%\beta$, else add $\alpha$ to the basis of the class $\A$; increment
%$k$ by one and repeat. 

\section{The maximal isomorphisms}
\label{results}
 We will classify all triples
\[(f,\A,\B),
\]
where $f:\A\to \B$ is an  isomorphism and $\A$ and $\B$ are
maximal classes with this property. This classification will be ``up
to symmetry'' as described in Section \ref{sec1}. In other words, we consider $(f,\A,\B)$ and
$(f',\A',\B')$ to be essentially the same if there are symmetries
$\mu$ and $\nu$ of $\sym$ such that the following diagram commutes:
\[
\begin{tikzpicture}[node distance=1.5cm, auto]
  \node (A) {$\A$};
  \node (B) [right of=A] {$\B$};
  \node (A') [below of=A] {$\A'$};
  \node (B') [below of=B] {$\B'$};
  \draw[->] (A) to node [swap] {$\mu$} (A');
  \draw[->] (B) to node {$\nu$} (B');
  \draw[->] (A) to node {$f$} (B);
  \draw[->] (A') to node [swap] {$f'$} (B');
\end{tikzpicture}
\]

Although it is not at all clear {\em a priori} that, when $(f,\A,\B)$ is a maximal isomorphism, there will be a \emph{symmetry} mapping $\A$ onto $\B$ this will indeed prove to be the case.  So, in fact, every maximal isomorphism is equivalent to some maximal automorphism $(f,\A,\A)$.  Remembering that the maps $f$ are built up from an initial isomorphism $h: \R \to \R$,  we shall take some care (available only after the fact) in the choice of representative maps $h$ on $\R$ to allow this result to emerge naturally.

Since we could arbitrarily permute the permutations of length two, three, and four in $\R$ (preserving length of course) there are apparently 96 maps $h$ to consider. Some routine calculations establish that there are only 6 equivalence classes of such maps under the symmetry operations given above.  We shall use the  representatives given in Table \ref{6maps} (recall that an isomorphism necessarily maps $1$ to $1$).

%\begin{table}
%\caption{6 representative bijections on $\R$}
%\label{6maps}
%\begin{center}
%\begin{tabular}{lllllll}
%\toprule
%&\multicolumn{6}{c}{Images}\\
%Bijection&$132$&$213$&$231$&$312$&$2413$&$3142$\\
%\midrule
%$h^{(1)}$&132&213&231&312&2413&3142\\
%$h^{(2)}$&132&213&231&312&3142&2413\\
%$h^{(3)}$&132&231&213&312&2413&3142\\
%$h^{(4)}$&132&231&213&312&3142&2413\\
%$h^{(5)}$&231&312&132&213&2413&3142\\
%$h^{(6)}$&231&312&213&132&2413&3142\\
%\bottomrule
%\end{tabular}
%\end{center}
%\end{table}

\begin{table}
\caption{6 representative bijections on $\R$}
\label{6maps}
\begin{center}
\begin{tabular}{lllllllll}
\toprule
&\multicolumn{8}{c}{Elements of $\R$ and their images}\\
Bijection&$12$&$21$&$132$&$213$&$231$&$312$&$2413$&$3142$\\
\midrule
$h^{(1)}$&$12$&$21$&132&213&231&312&2413&3142\\
$h^{(2)}$&$12$&$21$&132&213&231&312&3142&2413\\
$h^{(3)}$&$12$&$21$&132&231&213&312&2413&3142\\
$h^{(4)}$&$12$&$21$&132&231&213&312&3142&2413\\
$h^{(5)}$&$12$&$21$&231&312&132&213&2413&3142\\
$h^{(6)}$&$12$&$21$&231&312&213&132&2413&3142\\
\bottomrule
\end{tabular}
\end{center}
\end{table}

%\begin{table}
%\caption{6 representative bijections on $\R$}
%\label{6maps}
%\begin{center}
%\begin{tabular}{lllllll}
%\toprule
%&\multicolumn{6}{c}{Maps}\\
%Elements of $\R$&$h_1$&$h_2$&$h_3$&$h_4$&$h_5$&$h_6$\\
%\midrule
%132&132&132&132&132&231&231\\
%213&213&213&231&231&312&312\\
%231&231&312&213&213&132&213\\
%312&312&231&312&312&213&132\\
%2413&2413&2413&2413&3142&2413&2413\\
%3142&3142&3142&3142&2413&3142&3142\\
%\bottomrule
%\end{tabular}
%\end{center}
%\end{table}
%\beginWe can certainly $h$ so that  $12^h=12$ (by composing $h$ with the reverse symmetry if necessary).  Furthermore we can arrange that $132^h$ is either $132$ or $231$; for if $132^h=213$ we can compose with the reverse-complement symmetry and if $132^h=312$ we can compose with the inverse symmetry. and to have $2413^f=2413$ (by composing $f$ with the inverse symmetry if necessary).  Then necessarily we shall have $21^f=21$ and $3142^f=3142$ as well.  This already cuts down the number of maps on $\R$ to $24$ but we still have the freedom to compose $f$ (on both the left and right) with  the reverse complement symmetry to reduce to a total of 6 representative maps on $\R$.

The procedure given above for finding the maximal extension of each map $h:\R\longrightarrow\R$ to an isomorphism $f:\A\longrightarrow\B$ results in the basis $B$ for $\A$ given in Table \ref{bases}.  Recall that, until we have exhibited an isomorphism on $\Av(B)$ that extends $h$, we only know that $\A\subseteq \Av(B)$.  The necessary verifications are carried out below.  In addition we shall give the enumerations of the various classes.

%These maps are listed in Table~\ref{bases}. 
For easy reference let
\[(f^{(i)},\A^{(i)},\B^{(i)}),
\]
where $\B^{(i)} = (\A^{(i)})^{f^{(i)}}$, be the maximal isomorphism that extends the $i$th bijection on $\R$ in Table \ref{6maps}. For instance, $f^{(3)}$ is the maximal  isomorphism
defined by fixing $132$ and $312$ while interchanging $213$ and $231$;
and
\[\A^{(3)} = \Av(2143, 2431, 3124, 3412, 23514, 25134, 31452, 35214, 41532, 43152)\]

\begin{table}
\caption{Bases for classes $\A^{(1)},\ldots,\A^{(6)}$}
\label{bases}
\begin{tabular}{ll}
\toprule
Class&\multicolumn{1}{c}{Basis}\\
\midrule
$\A^{(1)}$&$\emptyset$\\
\midrule
$\A^{(2)}$&
	\begin{tabular}{llllll}
	23514&24513&25134&25143&25314&25413\\
	31452&31542&32514&34152&35124&35214\\
	41253&41352&41523&41532&42153&43152\\
	241635&315264&462513&536142&&\\
	\end{tabular}\\
\midrule
$\A^{(3)}$&
	\begin{tabular}{llllll}
	2143&2431&3124&3412&&\\
	25134&23514&31452&35214&41532&43152\\
	\end{tabular}\\	
\midrule
$\A^{(4)}$&
	\begin{tabular}{llllll}
	2143&2431&3124&3412&&\\
	23514&25134&31452&35214&41532&43152\\
	\end{tabular}\\	
\midrule
$\A^{(5)}$&
	\begin{tabular}{llllll}
	1324&4231&&&\\
	14253&21354&21453&21534&21543&23154\\
	23514&24153&24513&25134&25143&25413\\
	31254&32452&31524&31542&32154&32514\\
	32541&34125&34152&34512&35124&35142\\
	35214&35412&41253&41523&41532&42153\\
	42513&43152&43512&45123&45132&45213\\
	45312&52143&&&&\\
	\end{tabular}\\	
\midrule
$\A^{(6)}$&
	\begin{tabular}{llllll}
	1324&4231&&&\\
	14253&21354&21453&21534&21543&23154\\
	23514&24153&24513&25134&25143&25413\\
	31254&32452&31524&31542&32154&32514\\
	32541&34125&34152&34512&35124&35142\\
	35214&35412&41253&41523&41532&42153\\
	42513&43152&43512&45123&45132&45213\\
	45312&52143&&&&\\
	25314&41352&&&&\\
	\end{tabular}\\	
\bottomrule
\end{tabular}
\end{table}

\subsection*{The triple $(f^{(1)},\A^{(1)},\B^{(1)})$} 
The mapping
$f^{(1)}$ fixes all permutations of $\R$ and is thus the
identity mapping; $f^{(1)}$ can be seen as the representative for any
of the symmetries  of the full pattern containment order. Moreover, the basis for $\A^{(1)}$ is
empty and so $\A^{(1)} = \B^{(1)} = \sym$.

\subsection*{The triple $(f^{(2)},\A^{(2)},\B^{(2)})$}

%In this section let us
%simplify notation by suppressing the superscripts, so $(f,\A,\B) =
%(f^{(2)},\A^{(2)},\B^{(2)})$. 

Now let $\A$ stand for the class defined by the putative basis elements of  $\A^{(2)}$.  The class $\A$ is related
to the separable permutations\footnote{The class with basis $\{2413, 3142\}$, alternatively described as the smallest non-empty class $\X$ such that both $12[\alpha, \beta]$ and $21[\alpha, \beta]$ belong to $\X$ whenever $\alpha, \beta \in \X$.}. It also includes two simple permutations of
length four, and four of length five:
\[T = \{2413, 3142, 24153, 31524, 35142, 42513\}.
\]
None of the permutations in $T$ can be non-trivially inflated. Thus
$\A$ is the $\oplus$ and $\ominus$ completion of the finite class
$\Cl(T)$. In other words, $\A$ is defined by the following system of
set-equations:
\begin{align*}
  \A &= \C \cup \D \cup U,  \\
  \C &= (\D\cup U) \oplus \A, \\
  \D &= (\C\cup U) \ominus \A,
\end{align*}
where $U = \{1\}\cup T$. In these equations $\C$ and $\D$ stand, respectively, for the set of sum-decomposable and skew-decomposable permutations of $\A$.  In this representation of $\A$
all the unions are disjoint, a property we will now use. Let
$g : U \to U$ be defined by $2413^g = 3142$, $3142^g = 2413$ and
$\pi^g = \pi$ for $\pi \in U\setminus \{2413, 3142\}$. 

Now extend the definition of $g$ to the whole of $\A$ by defining it recursively on sums and skew sums:

\begin{align*}
  \pi^g &=
  \begin{cases}
    \sigma^g\oplus\tau^g   & \text{if $\pi\in\C$ and $\pi = \sigma\oplus\tau$}, \\
    \tau^g\ominus\sigma^g  & \text{if $\pi\in\D$ and $\pi = \sigma\ominus\tau$}.\\
  \end{cases}
\end{align*}

Since $1^g=1$ it is clear that $g$ fixes all permutations of length up to 3.  Since also $g$ exchanges $2413$ and $3142$ we have $g|_{\R} = h^{(2)}|_{\R}$.  It remains only to prove that $g$ is order preserving.
%Let
%\begin{align*}
%  \pi^g &=
%  \begin{cases}
%    \pi^h                  & \text{if $\pi\in U$},\\
%    \sigma^g\oplus\tau^g   & \text{if $\pi\in\C$ and $\pi = \sigma\oplus\tau$}, \\
%    \tau^g\ominus\sigma^g  & \text{if $\pi\in\D$ and $\pi = \sigma\ominus\tau$}.\\
%  \end{cases}
%\end{align*}
%For permutations in $\R = \Cl\{2413,3142\}$ we have
%\begin{gather*}
%  1^g    = 1^h = 1; \quad
%  12^g   = 1^g\oplus 1^g   = 12;\quad
%  21^g   = 1^g\ominus 1^g  = 21; \\
%  132^g  = 1^g\oplus 21^g  = 132; \quad
%  213^g  = 21^g\oplus 1^g  = 213; \\
%  231^g  = 1^g\ominus 12^g = 312; \quad
%  312^g  = 12^g\ominus 1^g = 231; \\
%  2413^g = 2413^h          = 2413; \quad
%  3142^g = 3142^h          = 3142.
%\end{gather*}
%Thus $g|_{\R} = f|_{\R}$. Assuming that $g$ is order preserving it
%follows from Proposition~\ref{prop:finite} that $f=g$ on $\A^{(2)}$. We
%shall now prove that $g$ indeed is order preserving. 
By
Lemma~\ref{lemma:shadow-and-downset} it suffices to prove that
\begin{equation}\label{eq:2}
  (\shadow \pi)^g = \shadow(\pi^g)
\end{equation}
for all $\pi\in \A$.  The proof will again use contradiction starting from the assumption that $\pi$ is a counterexample to the above of minimal length.

%induction on $\delta(\pi)$,
%where $\delta$ is the distance function from the proof of
%Proposition~\ref{prop:finite}.  

That
\eqref{eq:2} holds for $\pi\in U\cup\R$ is easy to check by direct
calculations, so our minimal counterexample must lie in $\A \setminus (U \cup \R)$. We first make a simple observation: for any permutations
$\sigma$ and $\tau$,
\begin{align}
  \shadow (\sigma\oplus\tau)
  &= (\shadow\sigma) \oplus \tau \,\cup\, \sigma\oplus(\shadow\tau);
  \label{eq:shadow-plus}\\
  \shadow (\sigma\ominus\tau)
  &= (\shadow\sigma) \ominus \tau \,\cup\, \sigma\ominus(\shadow\tau).
  \label{eq:shadow-minus}
\end{align}
Assume that $\pi=\sigma\oplus\tau$ with $\sigma \in \D\cup U$ and
$\tau \in \A$. Then
\begin{align*}
  \shadow(\pi^g)
  &= \shadow(\sigma^g \oplus \tau^g)
  && \text{by definition of $g$} \\
  &= (\shadow(\sigma^g)) \oplus \tau^g \,\cup\, \sigma^g\oplus(\shadow(\tau^g))
  && \text{by~\eqref{eq:shadow-plus}} \\
  &= (\shadow \sigma)^g \oplus \tau^g \,\cup\, \sigma^g\oplus(\shadow\tau)^g
  && \text{by minimality} \\
  &= ((\shadow \sigma) \oplus \tau)^g \,\cup\, (\sigma\oplus(\shadow\tau))^g
  && \text{by definition of $g$} \\
  &= (\shadow (\sigma \oplus \tau))^g
  && \text{by~\eqref{eq:shadow-plus}} \\
  &= (\shadow \pi)^g.
\end{align*}
So we would have a contradiction in this case, and similarly if $\pi$ were skew-decomposable.

%That $\shadow((\sigma\ominus\tau)^g) = (\shadow(\sigma\ominus\tau))^g$
%for $\sigma \in \C\cup U$ and $\tau \in \A$ follows similarly. This
%concludes the proof of \eqref{eq:2}. Thus $g$ is an 
%isomorphism and $\A=\A^{(2)}$.

In summary, the isomorphism $g$ behaves  like the identity except that it exchanges intervals $2413$ and $3142$ wherever they occur. It is easy to see that
$g$ is its own inverse. In particular, $g$ is an automorphism and
hence $\B^{(2)} = \A^{(2)}$.

The system of set-equations above directly translates to a system of
equations for the corresponding generating functions
\begin{align*}
  A(x) &= C(x) + D(x) + U(x), \\
  C(x) &= (D(x) + U(x))A(x),  \\
  D(x) &= (C(x) + U(x))A(x),
\end{align*}
in which $U(x) = 1 + 2x^4 + 4x^5$. Solving for $A(x)$ we find that
\[A(x) = \frac{\sqrt{p(x)}-q(x)-2}{q(x)-\sqrt{p(x)}-2},
\]
where
\begin{align*}
  p(x) &= 1 - 6x + x^2 - 12x^4 - 20x^5 + 8x^6 + 4x^8 + 16x^9 + 16x^{10},\\
  q(x) &= -1 + x + 2x^4 + 4x^5.
\end{align*}
The first few values of $\{|\A_n|\}_{n\geq 1}$ are
\[ 1, 2, 6, 24, 102, 446, 2054, 9818, 48218, 241686, 1231214, 6356050, 33178450,\dots
\]

 \subsection*{The triple $(f^{(3)},\A^{(3)},\B^{(3)})$} 
 Let $\A$ denote the class defined by the avoidance conditions in the $\A^{(3)}$ row of Table \ref{bases}.  In this case  $\A$
is the $\Gt$ class with its apex inflated to 2413, 3142 or any
permutation in $\Lt$:
\begin{equation}\label{eq:A3}
\A = \big\{ \sigma[1,\dots,1,\tau] : \sigma\in\Gt, \tau\in\Lt \cup \{2413,3142\} \big\}
\end{equation}

We need to define an isomorphism on $\A$ that extends $h^{(3)}$.  We shall use the exotic map $\xi$ defined in Section \ref{prepsection} as an automorphism of $\Lt$ and we extend it to $\Lt \cup \{2413,3142\}$ by having it fix $2413$ and $3142$.  Now we define a map $g$ on $\A$ by defining it on a general element $\pi=\sigma[1,\dots,1,\tau]$, with $\sigma\in\Gt$ and $\tau\in\Lt \cup
\{2413,3142\}$ as
\[\pi^g = \sigma[1,\dots,1,\tau^\xi]\]

% In this
%section, let $(f,\A,\B) = (f^{(3)},\A^{(3)},\B^{(3)})$. The class $\A$
%is the $\Gt$ class with its apex inflated to 2413, 3142 or any
%permutation in $\Lt$:
%\begin{equation}\label{eq:A3}
%\A = \big\{ \sigma[1,\dots,1,\tau] : \sigma\in\Gt, \tau\in\Lt \cup \{2413,3142\} \big\}
%\end{equation}
%
%In this section let us extend the exotic map $\xi$ by defining
%$2413^\xi = 2413$ and $3142^\xi = 3142$, and by slight abuse of
%notation we denote the extended map by $\xi$. Let $\pi =
%\sigma[1,\dots,1,\tau]$, with $\sigma\in\Gt$ and $\tau\in\Lt \cup
%\{2413,3142\}$, be any permutation in $\A$. Define the map $g:\A\to\B$
%by
%\[\pi^g = \sigma[1,\dots,1,\tau^\xi].
%\]
%We claim that $f=g$; in other words, the map $f$ is the identity off
%the apex of the $\Gt$ and on the apex it is the exotic map.  We shall
%now prove this using Lemma~\ref{lemma:shadow-and-downset} and
%Proposition~\ref{prop:finite}. 
The permutations $1$, $12$, $21$, $132$
and $312$ are in $\Gt$ and so are fixed by $g$. Since the permutations
$2413$ and $3142$ are fixed by $\xi$ they are also fixed by $g$. For
the remaining two permutations in $\R = \Cl\{2413,3142\}$ we have
\begin{align*}
  213^g &= 1[213^\xi] = 1[231] = 231; \\
  231^g &= 1[231^\xi] = 1[213] = 213.
\end{align*}
Thus $g$ agrees with $h^{(3)}$ on $\R$.  To prove that $g$ is an isomorphism we have to show that $(\shadow \pi)^g = \shadow(\pi^g)$ for all
$\pi$ in $\A$. To this end we note that, for $\pi=
\sigma[1,\dots,1,\tau]\in \A$, we have
\begin{equation*}
  \shadow\pi \,=\,
  \big\{\, \rho[1,\dots,1,\tau] : \rho\in \shadow \sigma \,\big\} \,\cup\,
  \big\{\, \sigma[1,\dots,1,\rho] : \rho\in \shadow \tau \,\big\}.
\end{equation*}
Thus
\begin{align*}
  \shadow(\pi^g)
  &= \shadow(\rho[1,\dots,1,\tau^\xi]) \\
  &=
  \big\{\, \rho[1,\dots,1,\tau^\xi] : \rho\in \shadow \sigma \,\big\} \,\cup\,
  \big\{\, \sigma[1,\dots,1,\rho] : \rho\in \shadow(\tau^\xi) \,\big\} \\
  &=
  \big\{\, \rho[1,\dots,1,\tau^\xi] : \rho\in \shadow \sigma \,\big\} \,\cup\,
  \big\{\, \sigma[1,\dots,1,\rho^\xi] : \rho\in \shadow\tau \,\big\} \\
  &=\big(\shadow(\sigma[1,\dots,1,\rho])\big)^g \\
  &=(\shadow\pi)^g.
\end{align*}
Thus $\A=\A^{(3)}$ as required. 
Because the exotic map $\xi$ is an involution, the map $f$ is also an
involution; in particular, $\B^{(3)} = \A^{(3)}$.

We can modify \eqref{eq:A3} a little to make the representation unique. We have
\[\A
= \big\{\, \sigma[1,\dots,1,\tau] \,:\,
\sigma\in\Gt,\, \tau\in \big(\Lt \setminus(\Inc \cup \Dec)\big) \cup \{1,2413,3142\} \,\big\},
\]
where $\Inc = \{1,12,123,1234,\dots\}$ and $\Dec = \{1,21,321,4321,\dots\}$. Thus the generating function $A(x)$ of $\A$ is
\begin{align*}
  A(x)
  &= \frac{x}{1-2x}\left(\frac{x}{1-2x} + 2x^4 - \frac{2x^2}{1-x}\right)\frac{1}{x}\\
  &= \frac{2}{1-x} - \frac{11}{8 \, {\left(1 - 2x \right)}} +
  \frac{1}{2 \,{\left(1 - 2x\right)}^{2}} - x^{3} - \frac{1}{2} \,
  x^{2} - \frac{1}{4} \, x - \frac{9}{8}.
\end{align*}
From this partial fraction decomposition it is routine to derive a
closed formula for $a_n = |\A_n|$, the number of permutations of
length $n$ in $\A$. We find that
\[a_n = 2 - 11\cdot 2^{n-3} + (n+1)\,2^{n-1} = 2 + (4n-7)\,2^{n-3}
\]
for $n\geq 4$. The first few values of $\{a_n\}_{n\geq 1}$ are
\[
1, 2, 6, 20, 54, 138, 338, 802, 1858, 4226, 9474, 20994, 46082, 100354, 217090, \dots
\]

\subsection*{The triple $(f^{(4)},\A^{(4)},\B^{(4)})$} 
This case is almost the same as the previous case.  The only difference is that we must extend the exotic map to interchange (rather than fix) $2413$ and $3142$.  Otherwise the calculations are the same.  Notice that $\A^{(3)}=\A^{(4)}$ and so this class has two inequivalent maximal isomorphisms.  Their product is also an isomorphism of course but it is not a maximal isomorphism.  Clearly the product interchanges $2413$ and $3142$ whenever they occur as intervals and leaves other points unaltered.  It is therefore the isomorphism that extends to the larger class $\A^{(2)}$.  The full automorphism group of $\A^{(3)}$ is thus $\langle c,
f^{(2)}, f^{(3)}\rangle$ and it is easy to see that this group is isomorphic to
$\ZZ_2\times\ZZ_2\times\ZZ_2$.

%In this
%section, let $(f,\A,\B) = (f^{(4)},\A^{(4)},\B^{(4)})$. This case is
%similar to the previous one. The class $\A$ is the $\Caret$ class with
%its apex inflated to 2413, 3142 or any permutation in $\V$:
%\begin{equation*}% \label{eq:A4}
%  \A = \big\{ \sigma[1,\dots,1,\tau] : \sigma\in\Caret, \tau\in\V \cup \{2413,3142\} \big\}.
%\end{equation*}
%To describe the map $f$ let $\pi$ be any permutation in $\A$. Note
%that $\pi^{-1}$ is in $\A^{(3)}$. Write $\pi^{-1} =
%\sigma[1,\dots,1,\tau]$, with $\sigma\in\Gt$ and $\tau\in\Lt \cup
%\{2413,3142\}$. Then
%\[\pi^f = \sigma[1,\dots,1,\tau^\zeta],
%\]
%where $\zeta$ is the exotic map from the definition of $f^{(3)}$,
%except that $\zeta$ interchanges $2413$ and $3142$. The proof of this
%is similar to the proof for $f^{(3)}$ and is omitted.
%
%Note that $f$ is not an automorphism of the class $\A$ but rather an
%isomorphism with the class $\B=\A^{(3)}$ above.  Since these classes
%are related by symmetry it follows that $\A$ and $\B$ both have two
%distinct non-symmetry automorphisms. Letting $i$ denote inverse,
%and $g=f^{(3)}$, we have the following picture:
%\[
%\begin{tikzpicture}[node distance=2cm, auto]
%  \node (A) {$\A$};
%  \node (B) [right of=A] {$\B$};
%  \draw[->] (A.20) to node {$f$} (B.160);
%  \draw[->] (B.200) to node {$i$} (A.340);
%  \path (B) edge [loop right] node {$g$} (B);
%\end{tikzpicture}
%\]
%Of the symmetries in $\rci$ only the identity map and $c$ are
%symmetries of $\B$. Thus the automorphism group of $\B$ is $\langle c,
%g, if\rangle$ and it is easy to see that this group is isomorphic to
%$\ZZ_2\times\ZZ_2\times\ZZ_2$.

\subsection*{The triple $(f^{(5)},\A^{(5)},\B^{(5)})$}
Let $\A$ be the class defined by the basis in the $\A^{(5)}$ row of Table \ref{bases}. The class $\A$
consists of the four wedge classes together with four extra permutations
of length four, and two of length five:
\begin{equation*}
  \A = \V \cup \Gt \cup \Caret \cup \Lt \cup \{2143,2413,3142,3412,25314,41352\}.
\end{equation*}

An isomorphism $g$ that extends $h^{(5)}$ may be defined as follows.  It will map the wedge classes as indicated in
\[
\begin{tikzpicture}[node distance=2cm, auto]
  \node (V) {$\V$};
  \node (G) [right of=V] {$\Gt$};
  \node (L) [right of=G] {$\Lt$};
  \node (C) [right of=L] {$\Caret$};
  \path (V) edge [loop right] node {$f_1$} (V);
  \draw[->] (G.20) to node {$f_2$} (L.160);
  \draw[->] (L.200) to node {$f_4$} (G.340);
  \path (C) edge [loop left] node {$f_3$} (C);
\end{tikzpicture}
\]
where $f_1 = i \xi i$, $f_2 = r\xi c$, $f_3 = ci\xi ic$, $f_4 = c\xi
r$, and $\xi$ denotes the exotic map (composition of maps is left to right).  It is routine to check that these maps are consistent (e.g. that permutations in $\V\cap\Gt$ are mapped in the same way by $f_1$ and $f_2$).  Furthermore we define $g$ to exchange $2143$ and $3412$ and to fix four permutations $2413$, $3142$, $25314$ and $41352$. 
Since each of $f_1, f_2, f_3, f_4$ are themselves isomorphisms it is clear that $g$ is an isomorphism.   It is straightforward to verify that $g$ extends $h^{(5)}$.

%$\V$ to itself as $i\xi i$, We shall now describe the behavior of $f$ on the wedge
%classes. Let $f_1$, $f_2$, $f_3$ and $f_4$ denote the restriction of
%$f$ to the class $\V$, $\Gt$, $\Caret$ and $\Lt$, respectively. Then
%\[
%\begin{tikzpicture}[node distance=2cm, auto]
%  \node (V) {$\V$};
%  \node (G) [right of=V] {$\Gt$};
%  \node (L) [right of=G] {$\Lt$};
%  \node (C) [right of=L] {$\Caret$};
%  \path (V) edge [loop right] node {$f_1$} (V);
%  \draw[->] (G.20) to node {$f_2$} (L.160);
%  \draw[->] (L.200) to node {$f_4$} (G.340);
%  \path (C) edge [loop left] node {$f_3$} (C);
%\end{tikzpicture}
%\]
%where $f_1 = i \xi i$, $f_2 = c\xi r$, $f_3 = c f_1 c$, $f_4 = r\xi
%c$, and $\xi$ denotes the exotic map. To complete the description of
%$f$ we note that $f$ interchanges $2143$ and $3412$ while it fixes the
%remaining four permutations, $2413$, $3142$, $25314$ and $41352$. We
%shall now prove that $f$ satisfies the stated description. The
%functions $f_1$, $f_2$, $f_4$ and $f_3$ are all clearly 
%isomorphisms. By Proposition~\ref{prop:finite} it thus suffices to
%show that
%\begin{alignat*}{2}
% f|_T &= f_1|_T &\quad &\text{where $T = \V \cap \R$};     \\
% f|_T &= f_2|_T &      &\text{where $T = \Gt \cap \R$};    \\
% f|_T &= f_3|_T &      &\text{where $T = \Caret \cap \R$}; \\
% f|_T &= f_4|_T &      &\text{where $T = \Lt \cap \R$}.
%\end{alignat*}
%We leave out this simple verification.
 We further note that the map
$f$ is its own inverse. In particular, it is an automorphism and so
$\B = \A$. Using inclusion-exclusion it is easy to show that
\[|\A_n| = 2^{n+1}-4n+2
\]
for $n\geq 5$. The first few values of $\{|\A_n|\}_{n\geq 1}$ are
\[ 1,2,6,22,48,106,230,482,990,2010,4054,8146,16334,32714,65478,131010,\dots
\]

\subsection*{The triple $(f^{(6)},\A^{(6)},\B^{(6)})$} Let $\A$ be the class defined by the basis in the $\A^{(6)}$ row of Table \ref{bases}.  The class $\A$
is like $\A^{(5)}$ but without the two non-wedge permutations of length $5$
in that class:
\[\A = \V \cup \Gt \cup \Caret \cup \Lt \cup \{2143,2413,3142,3412\}.
\]
In this case an isomorphism $g$ that extends $h^{(6)}$ may be defined as follows.  It maps the wedge classes amongst themselves as shown

\[
\begin{tikzpicture}[node distance=1.5cm, auto]
  \node (V) {$\V$};
  \node (G) [right of=V] {$\Gt$};
  \node (C) [below of=G] {$\Caret$};
  \node (L) [below of=V] {$\Lt$};
  \draw[->] (V) to node {$f_1$} (G);
  \draw[->] (G) to node {$f_2$} (C);
  \draw[->] (C) to node {$f_3$} (L);
  \draw[->] (L) to node {$f_4$} (V);
 \end{tikzpicture}
\]
where $f_1 = ri\xi r$, $f_2 = r\xi ri$, $f_3 = rir\xi$, $f_4 =  \xi i$,
and $\xi$ denotes the exotic map; again composition is left to right.  The map $g$ is also defined to interchange $2143$ and $3412$ and to fix $2413$ and $3142$.  Again it is readily checked that $g$ is consistent on the intersections of wedge classes and that it is an isomorphism, obviously an automorphism.  In this case the square of $g$ happens to be the symmetry $rc$.

The cardinality
$|\A^{(6)}_n|$ is the same as $|\A^{(5)}_n|$ except for $n=5$.

\section{Conclusion}
To within symmetry there are only 6 maximal isomorphisms and the classes $\A^{(i)}$ on which they are defined are rather restricted.  Non-maximal isomorphisms are (also to within symmetry) necessarily defined on subclasses of the $\A^{(i)}$ and are therefore also quite restricted although we have not attempted to categorize them.  Nevertheless some consequences for arbitrary isomorphisms are easily drawn. 
\begin{corollary}
If $\phi:\A\longrightarrow\B$ is an isomorphism and $\A$ has no basis element of length 5 or less then $\phi$ is induced by a symmetry of the pattern containment order.
\end{corollary}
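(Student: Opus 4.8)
The plan is to show that, under the stated hypothesis, $\phi$ must restrict on $\R$ to a symmetry-image of the identity map $h^{(1)}$, the only one of the six representatives whose maximal class is all of $\sym$. First I would translate the hypothesis into a containment. If the basis of $\A$ has no element of length $5$ or less, then every permutation $\gamma$ of length at most $5$ lies in $\A$: were $\gamma\notin\A$, it would contain a basis element, necessarily of length at least $6>|\gamma|$, which is impossible. In particular $\R=\Cl\{2413,3142\}\subseteq\A$, so $\phi|_{\R}$ is genuinely defined on all of $\R$. Since $\phi$ preserves length and, by Lemma~\ref{lemma:shadow-and-downset}, shadows, the bookkeeping recorded after Proposition~\ref{prop:finite} applies unchanged and yields $\R^{\phi}=\R$; thus $\phi|_{\R}$ is a length-preserving bijection of $\R$ onto itself.

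Next I would pass to a representative. Every length-preserving bijection of $\R$ is symmetry-equivalent to one of $h^{(1)},\dots,h^{(6)}$, so I would choose symmetries $\mu,\nu$ with $\psi=\mu^{-1}\phi\nu$ satisfying $\psi|_{\R}=h^{(i)}$ for some $i$. Then $\psi$ is an isomorphism from $\A\mu$ to $\B\nu$, and since symmetries preserve length and carry bases to bases, $\A\mu$ inherits the hypothesis and hence, by the observation above, contains every permutation of length at most $5$. Writing $B^{(i)}$ for the basis of $\A^{(i)}$ (Table~\ref{bases}), the maximality established in Section~\ref{results} gives $\A\mu\subseteq\A^{(i)}=\Av\bigl(B^{(i)}\bigr)$.

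Finally I would eliminate the cases $i\geq 2$ and conclude. Reading Table~\ref{bases}, each of $B^{(2)},\dots,B^{(6)}$ contains a permutation $\beta$ of length at most $5$ (length $4$ for $i=3,4,5,6$ and length $5$ for $i=2$). For such $i$ we would have $\beta\notin\A^{(i)}\supseteq\A\mu$, yet $|\beta|\leq 5$ forces $\beta\in\A\mu$ — a contradiction. Hence $i=1$, so $\psi|_{\R}=h^{(1)}$ is the identity on $\R$. A minimal-counterexample argument identical to the proof of Proposition~\ref{prop:finite}, applied directly to $\psi$, then finishes: any minimal $\pi$ with $\pi^{\psi}\neq\pi$ has $\shadow(\pi^{\psi})=(\shadow\pi)^{\psi}=\shadow\pi$, so by Proposition~\ref{prop:smith} the permutations $\pi$ and $\pi^{\psi}$ form a coincident-shadow pair and therefore lie in $\R$, where $\psi$ is the identity — a contradiction. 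Thus $\psi$ is the identity on all of $\A\mu$; in particular $\B\nu=\A\mu$ and $\phi=\mu\nu^{-1}$ is a symmetry. The substantive work is entirely front-loaded into the classification, so the only genuinely delicate point is this last move: showing that $i=1$ yields not merely $\A\mu=\sym$ but that $\phi$ is \emph{itself} a symmetry, which is why the minimality argument of Proposition~\ref{prop:finite} is invoked rather than a bare appeal to $\A^{(1)}=\sym$.
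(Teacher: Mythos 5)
Your proof is correct and is essentially the paper's argument unwound into its contrapositive: the paper simply notes that if $\phi$ is not a symmetry then, up to symmetry, $\A\subseteq\A^{(i)}$ for some $i\in\{2,\dots,6\}$, and each such $\A^{(i)}$ has a basis element of length at most $5$, which forces $\A$ to have one too. Your extra care at the end (using the minimal-counterexample/shadow argument to upgrade ``$\psi|_{\R}=h^{(1)}$'' to ``$\psi$ is the identity, hence $\phi$ is a symmetry'') is a legitimate filling-in of a step the paper leaves implicit in its classification, but the route is the same.
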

\begin{proof}
If $\phi$ is not a symmetry then, to within a symmetry, $\A$ is contained within one of the classes $\A^{(i)}$, $i=2,3,4,5,6$ above and since each of these classes contains a basis element of length 5 or less so also will $\A$ itself.
\end{proof}

Furthermore, as all the proper classes that occur are subclasses of $\A^{(2)}$, whose growth rate is easily seen to be the reciprocal of the smallest positive root of the polynomial $p(x)$ we obtain:

\begin{corollary}
If $\A$ admits a non-symmetry isomorphism onto another class $\B$ then the growth rate of $\A$ is at most 5.90425.  Furthermore $\A$ contains only finitely many simple permutations and therefore is partially well-ordered.
\end{corollary}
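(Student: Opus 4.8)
The plan is to reduce the claim about an arbitrary $\A$ to a statement about the five proper classes $\A^{(2)},\dots,\A^{(6)}$, and then to read off both conclusions from the explicit structure of the largest of them, $\A^{(2)}$.

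First I would invoke the reduction underlying the previous corollary: if $\phi$ is not induced by a symmetry then, by Proposition~\ref{prop:finite} together with the classification recorded in Table~\ref{6maps}, the restriction of $\phi$ to $\R$ coincides up to symmetry with one of $h^{(2)},\dots,h^{(6)}$ (the case $h^{(1)}$ being exactly the symmetry case), so that $\A$ is contained up to symmetry in one of the maximal proper classes $\A^{(2)},\dots,\A^{(6)}$. Both the growth rate and the number of simple permutations of a class are invariant under the eight symmetries; moreover the growth rate is monotone under inclusion, and the number of simple permutations cannot increase under passage to a subclass. It therefore suffices to bound these two quantities for each of the five proper classes, and I would show that $\A^{(2)}$ is in each case the extremal one.

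For the growth rate I would first dispose of $\A^{(3)},\dots,\A^{(6)}$: their enumerations were computed above ($2+(4n-7)2^{n-3}$ for $\A^{(3)}=\A^{(4)}$, and $2^{n+1}-4n+2$ for $\A^{(5)},\A^{(6)}$), and each manifestly has growth rate $2$. It then remains to evaluate $\mathrm{gr}(\A^{(2)})$ from the generating function $A(x)=(\sqrt{p(x)}-q(x)-2)/(q(x)-\sqrt{p(x)}-2)$ found above. Since $A(x)$ has nonnegative coefficients, by Pringsheim its radius of convergence is its least positive singularity, and the only candidates are the branch points of $\sqrt{p(x)}$ (the positive zeros of $p$) and the poles arising from zeros of the denominator $q(x)-\sqrt{p(x)}-2$. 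I would check that the denominator does not vanish on $[0,x_0)$, where $x_0$ is the smallest positive root of $p$; the dominant singularity is then the square-root branch point at $x_0$, so that $\mathrm{gr}(\A^{(2)})=1/x_0$, and a numerical evaluation gives $1/x_0\approx 5.90425$. As this exceeds $2$, every proper class has growth rate at most $1/x_0$, and hence so does $\A$ by monotonicity.

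For the second assertion I would count simple permutations. Because $\A^{(2)}=\C\cup\D\cup U$ with $U=\{1\}\cup T$, $T=\{2413,3142,24153,31524,35142,42513\}$, and $\C,\D$ the sum- and skew-decomposable members, every permutation of $\A^{(2)}$ is decomposable or lies in $U$; since a simple permutation of length at least four is indecomposable, the simples of $\A^{(2)}$ are precisely $1,12,21$ and the six elements of $T$. For the wedge classes the count is even easier: by their increasing-above-decreasing structure they contain no simple permutation of length at least three (the last entry of such a permutation is always $1$ or $n$, leaving the remaining entries as a proper interval), so each of $\A^{(3)},\dots,\A^{(6)}$ likewise has only finitely many simples. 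Thus whichever proper class contains $\A$, the class $\A$ inherits finiteness of its simple permutations, and the final clause follows from the theorem of Albert and Atkinson that a pattern class with only finitely many simple permutations is partially well-ordered.

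The step I expect to require the most care is the singularity analysis for $\A^{(2)}$: although $1/x_0$ is immediate to guess from the closed form, legitimately identifying it as the growth rate means excluding a pole of $A(x)$ strictly inside the disc of radius $x_0$, that is, confirming that $q(x)-\sqrt{p(x)}-2$ stays bounded away from $0$ on $[0,x_0)$. Everything else is the symmetry-reduction of the previous corollary, the monotonicity and symmetry-invariance of the two invariants, the routine completion argument bounding the simple permutations, and the appeal to the finite-simples theorem.
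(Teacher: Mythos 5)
Your proof is correct and follows the paper's route: reduce (up to symmetry) to the proper classes $\A^{(2)},\dots,\A^{(6)}$, extract the growth rate of $\A^{(2)}$ as the reciprocal of the smallest positive root of $p(x)$, and note that the simple permutations of $\A^{(2)}$ are confined to $\{1,12,21\}\cup T$ so that the Albert--Atkinson finitely-many-simples theorem gives partial well-order. The only (immaterial) difference is that the paper observes that $\A^{(3)},\dots,\A^{(6)}$ are all subclasses of $\A^{(2)}$, so only $\A^{(2)}$ needs examining, whereas you bound each of the five classes separately.
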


Finally, we note that it also makes sense to pursue this investigation in search of classes having \emph{automorphisms} extending those of the finite class $\R$. Specifically, given a subgroup $G \leq \mathrm{Aut}(\R)$ there is a maximal class $\A^{(G)}$ containing $\R$ such that every element of $G$ extends to an automorphis of $\A^{(G)}$. The basic scheme of inductively building up the class still applies though one must ensure of course that \emph{all} the images of some permutation $\pi$ under the elements of $G$ can be defined. Because of this stringent condition, one would expect generally that the classes $\A^{(G)}$ should be relatively small. The group of automorphisms of $\R$ is isomorphic to $\left( \mathbb Z/2 \mathbb Z \right)^2 \times S_4$ since (and independently) the two monotone permutations in $\R$ can be fixed or interchanged, as can the two permutations of length four, while the remaining four non-monotone permutations of length three can be arbitrarily permuted. In searching for automorphisms.  we no longer have the option of applying different symmetries on each side to obtain an equivalent class -- we must apply the same symmetry. Effectively we have a single case for each orbit of subgroups of $\mathrm{Aut}(\R)$ under the action of the normal symmetries by conjugation. Rather than report complete results for this case, which we have not computed, we mention only one result:

\begin{theorem}
The maximal class $\A$ extending $\R$ with the property that every automorphism of $\R$ extends to $\A$ consists of $\R$ and all the permutations that can be expressed as the sum or skew sum of an increasing and a decreasing permutation. 
\end{theorem}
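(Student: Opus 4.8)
The plan is to carry out, for the full group $G=\Aut(\R)$, the inductive construction of the maximal class $\A^{(G)}$ introduced just above, and to identify its limit with the explicitly described class. Write $\A_0$ for $\R$ together with all permutations $\alpha\oplus\beta$ and $\alpha\ominus\beta$ in which one of $\alpha,\beta$ is increasing and the other decreasing (allowing the length-one permutation to play either role, so that $\Inc\cup\Dec\subseteq\A_0$). Two preliminary facts organise the argument. First, $\Aut(\R)\cong(\ZZ/2\ZZ)^2\times S_4$, the $S_4$ permuting the four non-monotone length-three patterns $\{132,213,231,312\}$ arbitrarily and the two involutions interchanging $12\leftrightarrow 21$ and $2413\leftrightarrow 3142$. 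Second, by Proposition~\ref{prop:smith} those four patterns all possess the single shadow $\{12,21\}$, whereas $123$ and $321$ have the distinct shadows $\{12\}$ and $\{21\}$. A short computation with \eqref{eq:shadow-plus}--\eqref{eq:shadow-minus} shows that a one-point deletion from a sum or skew sum of monotone permutations is again of that form, so $\A_0$ is a pattern class; note that every length-three permutation already lies in $\A_0$. I would then prove the two inclusions $\A_0\subseteq\A^{(G)}$ and $\A^{(G)}\subseteq\A_0$ separately.

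For $\A_0\subseteq\A^{(G)}$ I must show that each $h\in\Aut(\R)$ extends to an automorphism of $\A_0$. By Proposition~\ref{prop:finite} the extension $g$ is uniquely determined, so it suffices, by Lemma~\ref{lemma:shadow-and-downset} and induction on length, to check that $(\shadow\pi)^g$ is the shadow of a unique element of $\A_0$ for every $\pi\in\A_0$. The non-monotone permutations of $\A_0\setminus\{2413,3142\}$ split into four ``corner families'' (ascending-then-descending and descending-then-ascending, each at two value levels), a family being indexed by its two run lengths; by \eqref{eq:shadow-plus}--\eqref{eq:shadow-minus} the shadow of such a permutation simply decrements one run length, so each family is a quadrant grid in the shadow order, whose only order-automorphisms are the identity and the run-length swap. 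The forced map $g$ permutes the four families according to the action of $h$ on their length-three germs $\{132,213,231,312\}$, inserting a run-length swap exactly where consistency with $g$ on the shared monotone permutations $\Inc\cup\Dec$ (which sit on the family boundaries and are governed by $h(12)$) demands it; the fact that the four germs share one shadow is what prevents any clash at the bottom of the induction. I would exhibit this $g$ on a generating set of $\Aut(\R)$ and verify one representative of each behaviour — e.g.\ $h(213)=231$, which carries a valley germ to a mountain germ — the boundary bookkeeping being the only delicate point.

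The reverse inclusion $\A^{(G)}\subseteq\A_0$ is the crux. Since every length-three permutation lies in $\A_0$, each of the twelve length-four permutations outside $\A_0$ is a basis element of $\A_0$; a routine structural analysis (of the kind deferred elsewhere in the paper) shows these are the only basis elements, so $\A_0=\Av(F)$ for this twelve-element set $F$. It then suffices to show that no $\beta\in F$ can be added, i.e.\ that for each such $\beta$ there is an $h\in\Aut(\R)$ with $(\shadow\beta)^h$ equal to no shadow at all; for then the extension of $h$ cannot be defined at $\beta$, whence $\beta\notin\A^{(G)}$. This rests on a small non-realizability fact that I would isolate first: among sets of length-three patterns, $\{132,231\}$ is the shadow of no permutation (indeed, of the six two-element subsets of $\{132,213,231,312\}$ only $\{132,213\}$ and $\{231,312\}$ occur as shadows). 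For instance $\shadow(2143)=\{132,213\}$, and taking $h\in S_4$ with $h(132)=132$, $h(213)=231$ gives $(\shadow 2143)^h=\{132,231\}$, which is not a shadow, so $2143\notin\A^{(G)}$; the remaining eleven basis elements are dispatched by the same device, some invoking the $12\leftrightarrow 21$ involution to turn their shadows into non-shadows.

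The main obstacle is this maximality step: pinning down the finite basis of $\A_0$ and matching each basis element with an automorphism that sends its shadow to a non-shadow. By comparison the containment step is more mechanical, its single subtle point being the consistent placement of run-length swaps across the four corner families at the monotone boundaries they share.
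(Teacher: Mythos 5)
The paper states this theorem without any proof at all (only the basis and the count $4n-6$ are asserted afterwards), so there is nothing to compare against directly; your argument supplies what the paper omits, and its overall strategy --- two inclusions, driven by Lemma~\ref{lemma:shadow-and-downset}, Proposition~\ref{prop:smith} and the paper's inductive construction of $\A^{(G)}$ --- is the natural one and is essentially correct. I checked the load-bearing facts: the non-realizability claim is right (the only length-$4$ permutations avoiding both $123$ and $321$ are $2143,2413,3142,3412$, with shadows $\{132,213\}$, the full set of four, the full set of four, and $\{231,312\}$), and for each of the twelve excluded length-$4$ permutations there is indeed an $h\in\Aut(\R)$ sending its shadow to a non-shadow, so none of them enters $\A^{(G)}$. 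The forward inclusion also works: for example, for $h$ swapping $12\leftrightarrow 21$ and fixing all four germs, the family $\{\iota_a\oplus\delta_b\}$ maps to itself by $(a,b)\mapsto(b-1,a+1)$, which is exactly your ``run-length swap forced at the monotone boundary,'' and every element of $(\ZZ/2\ZZ)^2\times S_4$ is realized.

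Two of your ``routine'' deferrals are genuinely load-bearing and should be flagged as such. First, the reverse inclusion only yields $\A^{(G)}\subseteq\Av(F)$ for your twelve-element set $F$, so the identity $\Av(F)=\A_0$ (no basis elements of length $\ge 5$) must actually be verified; without it the maximality claim is incomplete. Second, your isolated two-element non-realizability fact does not by itself dispatch the ten basis elements with three-element shadows: a three-element shadow can be realizable even when its non-monotone part is a ``forbidden'' pair (e.g.\ $\shadow(2431)=\{321,132,231\}$ is a shadow although $\{132,231\}$ is not), so in particular the bare $12\leftrightarrow 21$ involution fails for $1342$. What one actually needs, and what does hold, is that $\{123,231,312\}$ and $\{321,132,213\}$ are not shadows of any length-$4$ permutation; with that supplement every case goes through. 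Neither point is a wrong idea, but both are finite checks that must appear explicitly for the proof to be complete.
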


The basis of $\A$ consists of all the permutations of length 4 that do not belong to it, and $\A$ contains $4n-6$ permutations of each length $n \geq 5$ (containing 1, 2, 6 and 12 permutations of lengths 1 through 4 respectively). 

%\bibliographystyle{plain}
%\begin{thebibliography}{9}
%
%\bibitem{Smith06} R.~Smith,
%  \newblock Permutation reconstruction,
%  \newblock {\em Electronic Journal of Combinatorics}, 13, N11, 2006.
%\end{thebibliography}
\bibliographystyle{acm}
\bibliography{refs}

\def\cprime{$'$}
\begin{thebibliography}{1}

\bibitem{PermLab1.0}
{\sc Albert, M.}
\newblock Permlab: Software for permutation patterns.
\newblock \url{http://www.cs.otago.ac.nz/PermLab}, 2012.

\bibitem{simion:restricted-perm:}
{\sc Simion, R., and Schmidt, F.~W.}
\newblock Restricted permutations.
\newblock {\em European J. Combin. 6}, 4 (1985), 383--406.

\bibitem{smith:permutation-rec:}
{\sc Smith, R.}
\newblock Permutation reconstruction.
\newblock {\em Electron. J. Combin. 13\/} (2006), Note 11, 8 pp.

\end{thebibliography}

\end{document}